\newtheorem{theorem}{Theorem}
\newtheorem{lemma}[theorem]{Lemma}
\newtheorem{dfn}{Definition}
\newtheorem{obs}{Observation}
\newcommand{\bdf}{\begin{df} \begin{rm}}
\newcommand{\edf}{\end{rm} \end{df}}
\newenvironment{proof}{{\bf Proof.}}{\hspace*{\fill} \rule{2mm}{2mm} \par \hspace{0.1mm}}
\title{Graphs with $C_3$-free vertices are not universal fixers}
\author{ Magdalena Lema\'nska $^{1}$, 
 Monika Rosicka $^{1}$, Rita Zuazua $^{2}$
\\
\\
$^1${\small  Gda\'nsk University of Technology, Poland, }
\\ {\small   magda\@@mif.pg.gda.pl, rosamo\@@op.pl}
\\ 
$^2${\small  Science Faculty, UNAM, Mexico}
\\ {\small   ritazuazua\@@gmail.com}
\\ 
}
\date{}
\begin{document}

\maketitle

\begin{abstract}
A non-isolated vertex $x\in V(G)$ is called $C_{3}$-free if $x$ belongs to no
triangle of $G$. In \cite{BMW} Burger, Mynhardt and Weakley introduced the idea of universal fixers. Let $G=(V,E)$ be a graph with $n$ vertices and $G'$ a copy of $G$. For a bijective function $\pi:V(G)\mapsto V (G')$, we define the prism $\pi G$ of $G$ as follows: $V(\pi G)=V(G)\cup V(G')$ and $E(\pi G)=E(G)\cup E(G')\cup M_{\pi}$, where $M_{\pi}=\{u\pi (u): u\in V(G)\}$. Let $\gamma(G)$
be the domination number of $G$. If $\gamma(\pi G)=\gamma(G)$ for any bijective function $\pi$, then $G$ is called a universal fixer. In \cite{MX} it is conjectured that the only universal fixer is the edgeless graph $\overline{K_n}$. In this note, we prove that any graph $G$ with $C_3$-free vertices is not a universal fixer graph.

\vspace{.05cm}

{\bf Keywords:} dominating sets, universal fixers.
\vspace{.05cm}

{\bf Subject Classification:} 05C69
\end{abstract}
\section{Introduction}
Let $G=(V,E)$ be an undirected graph. The \textit{neighborhood} of a vertex $v\in V(G)$ in $G$ is the set $N_G(v)$ of all vertices adjacent to $v$ in $G$. For a set $X\subseteq V(G)$, the \textit{open neighborhood} $N_G(X)$ is defined to be $\bigcup_{v\in X} N_G(v)$, the \textit{closed neighborhood}, $N_G[X]=N_G(X)\cup X$ and we denote by $[X]$ the subgraph of $G$ induced by the set of vertices $X.$ For two sets of vertices $X,Y\subseteq V(G)$, we denote by $E(X,Y)$ the set of edges $xy\in E(G)$ such that $x\in X$ and $y\in Y.$

A set $D\subseteq V(G)$ is a \textit{dominating set} of $G$ if $N_G[D]=V.$ The \textit{domination number} of $G$, denoted by $\gamma(G)$, is the minimum cardinality of a dominating set in $G.$ A $\gamma$-set of $G$ is a dominating set of $G$ of cardinality $\gamma(G).$ If a set of vertices $A\subseteq V(G)$ is dominated by a set of vertices $D\subseteq V(G)$, that is, if for every vertex $v\in A$ there is $u\in D$ such that $vu\in E(G)$, we write $D\succ A.$ A set $S\subset V(G)$ is a \textit{2-packing} of $G$ if $N_G[u]\cap N_G[v]=\emptyset$ for every distinct $u, v\in S.$

\begin{dfn}
Let $G=(V,E)$ be a graph and $G'$ a copy of $G$. For a bijective function $\pi:V(G)\mapsto V(G'),$ we define the prism $\pi G$ of $G$ as follows:

$$V(\pi G)=V(G)\cup V(G')\ \ and \ \ E(\pi G)=E(G)\cup E(G')\cup M_{\pi},$$ 

where $M_{\pi}=\{u\pi(u): u\in V(G)\}.$
\end{dfn}

It is clear that every permutation $\pi$ of $V(G)$ defines a bijective function from $V(G)$ to $V(G')$, so we indistinctly use the permutation $\pi$ of $V(G)$ or the associated bijection $\pi: V(G)\mapsto V(G').$

It is known \cite{MX} that for any permutation $\pi $ and any graph $G$, the domination numbers $\gamma (G)$ and $\gamma (\pi G)$ have the following relation: $\gamma (G)\leq \gamma (\pi G)\leq 2\gamma (G)$. The graph $G$ is called a universal fixer if $\gamma(\pi G)=\gamma(G)$ for every permutation $\pi$ of $V(G).$

Universal fixers were studied in \cite{MX} for several classes of graphs and it was conjectured that the edgeless graph $\overline{K_n}$ is the only universal fixer. In \cite{BM}, \cite{CGM} and \cite{RGG} it is shown that regular graphs, claw-free graphs and bipartite graphs are not universal fixers.

\section{Useful lemmas}
In what follows, we suppose that the graph $G=(V,E)$ has $n$ vertices. For $x\in V(G)$, the copy of $x$ in $V(G')$ is denoted by $x'.$ Similarly, if $A\subset V(G),$ we denote $A'=\{u'\in V(G'): u\in A\}.$

\begin{dfn}
A $\gamma$-set $A$ of $G$ is called a separable $\gamma$-set (or an $A_1$-$\gamma$-set) if it can be partitioned into two nonempty subsets $A_1$ and $A_2,$ such that $A_1\succ V-A.$
\end{dfn}

In \cite{MX}, the following lemma is proved:

\begin{lemma}
If $A=A_1\cup A_2$ is an $A_1$-$\gamma$-set of $G,$ then: 
\begin{enumerate}
\item $A_2$ is an independent set;
\item $E(A_1,A_2)=\emptyset$;
\item $A_2$ is a 2-packing of $G$.
\end{enumerate}
\end{lemma}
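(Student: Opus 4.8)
The plan is to obtain all three conclusions from the minimality of $\gamma(G)$: in each case I would assume the conclusion fails and then exhibit a dominating set of $G$ of size $\gamma(G)-1$, which is impossible. The feature that makes everything go through is the separability hypothesis $A_1\succ V-A$: since every vertex outside $A$ is dominated by $A_1$ alone, any modification that touches only $A_2$ (or adds one new vertex) requires re-checking domination only for the few vertices actually involved. I would also use throughout that $A_1$ and $A_2$ are disjoint, so deleting vertices of $A_2$ never removes a vertex of $A_1$ and hence never disturbs the domination of $V-A$.

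First I would prove the single statement that yields both (1) and (2), namely that $N_G(v)\cap A=\emptyset$ for every $v\in A_2$. Suppose instead that some $v\in A_2$ has a neighbour $u\in A$. Then I claim $A\setminus\{v\}$ dominates $G$: each of its vertices dominates itself, $v$ is dominated by $u\in A\setminus\{v\}$, and every vertex of $V-A$ is dominated by $A_1\subseteq A\setminus\{v\}$. But $|A\setminus\{v\}|=\gamma(G)-1$, contradicting minimality. Once $N_G(v)\cap A=\emptyset$ holds for all $v\in A_2$, conclusion (1) is exactly ``there is no edge inside $A_2$'' and conclusion (2) is exactly ``there is no edge between $A_1$ and $A_2$''.

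For (3), I would assume $A_2$ is not a $2$-packing, so there are distinct $a,b\in A_2$ and a vertex $w\in N_G[a]\cap N_G[b]$. Since $A_2$ is independent by (1), $a$ and $b$ are non-adjacent, which forces $w\neq a$ and $w\neq b$, hence $wa,wb\in E(G)$; and because $w\in N_G(a)$, the claim from the previous step gives $w\in V-A$. Now form $A^{\ast}=(A\setminus\{a,b\})\cup\{w\}$, which has cardinality $\gamma(G)-1$. It dominates $G$: the vertices of $A\setminus\{a,b\}$ (in particular those of $A_1$) dominate themselves, both $a$ and $b$ are dominated by $w$, $w$ dominates itself, and $V-A$ is dominated by $A_1$. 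This again contradicts $\gamma(G)=|A|$, proving (3).

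I do not expect a genuine obstacle here. The only point that needs a little care is in (3): one must invoke independence of $A_2$ (conclusion (1)) to be sure $w\notin\{a,b\}$, which is precisely what makes $A^{\ast}$ both strictly smaller than $A$ and still a dominating set; without it the replacement argument collapses. Everything else is routine verification that the modified sets dominate, using the localization afforded by $A_1\succ V-A$.
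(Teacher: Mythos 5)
Your proof is correct. Note that the paper does not actually prove this lemma---it is quoted from the reference [MX] (Mynhardt and Xu)---but your argument is the standard one used there: a vertex-deletion exchange for (1) and (2) via the observation that no $v\in A_2$ has a neighbour in $A$, and a two-for-one swap with a common neighbour $w$ for (3), each contradicting the minimality of $|A|=\gamma(G)$. The one delicate point, that $w\notin\{a,b\}$ so that $|A^{\ast}|=\gamma(G)-1$, is exactly the place where (1) must be invoked, and you handle it explicitly.
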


For an $A_1$-$\gamma$-set $A$ in $G$ and a permutation $\pi$, we  denote $B_1^{'}=\pi(A_1), B_2^{'}=\pi(A_2)$ and $B^{'}=\pi(A)$. It is clear that $B^{'}=B_1^{'}\cup B_2^{'}.$ If the permutation $\pi$ is the identity, then $B_1^{'}=A_1^{'}$ and $B_2^{'}=A_2^{'}.$

\begin{dfn}
We say that an $A_1$-$\gamma$-set $A=A_1\cup A_2$ is effective under some permutation $\pi$ if $B^{'}=\pi(A)$ is a $B_2^{'}$-$\gamma$-set in $G'$, where $B_2^{'}=\pi(A_2).$
\end{dfn}

\begin{obs}
By Lemma 1, if an $A_1$-$\gamma$-set $A$ is effective under some permutation $\pi$, then $B_1^{'}=\pi(A_1)$ is an independent set, $E(B_1^{'},B_2^{'})=\emptyset$ and $B_1^{'}$ is a 2-packing in $G'.$
\end{obs}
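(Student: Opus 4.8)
The plan is to obtain Observation 1 as an immediate consequence of Lemma 1, the point being that one should apply Lemma 1 not in $G$ but in the copy $G'$, and with the two parts of the relevant $\gamma$-set interchanged relative to the partition $A=A_1\cup A_2$.

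First I would unwind the hypothesis. Saying that the $A_1$-$\gamma$-set $A=A_1\cup A_2$ is \emph{effective} under $\pi$ means, by definition, that $B'=\pi(A)$ is a $B_2'$-$\gamma$-set in $G'$, where $B_1'=\pi(A_1)$ and $B_2'=\pi(A_2)$. Thus $B'$ is a $\gamma$-set of $G'$ that is partitioned into the two sets $B_1'$ and $B_2'$ with $B_2'\succ V(G')-B'$. Since $\pi$ is a bijection and $A_1,A_2\neq\emptyset$, both $B_1'$ and $B_2'$ are nonempty, so this is a legitimate separable partition of a $\gamma$-set of $G'$.

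Second, I would simply invoke Lemma 1 verbatim, applied to the graph $G'$ and to the $\gamma$-set $B'=B_2'\cup B_1'$ in which $B_2'$ plays the role of the dominating part (the ``$A_1$'' of the lemma) and $B_1'$ plays the role of the ``$A_2$''. Parts (1), (2), (3) of Lemma 1 then read, respectively: $B_1'$ is an independent set of $G'$; $E(B_2',B_1')=\emptyset$; and $B_1'$ is a $2$-packing of $G'$. Since $E(B_2',B_1')$ and $E(B_1',B_2')$ denote the same set of edges, these are exactly the three assertions of Observation 1.

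I do not expect any real obstacle here: the statement is essentially a relabelled instance of Lemma 1. The only thing that needs care is the bookkeeping of subscripts — in $A$ it is the index-$1$ part that dominates the complement, whereas in the effective image $B'$ it is the index-$2$ part that dominates — so Lemma 1 must be applied with the roles of the two parts swapped; once that correspondence is fixed, the conclusion is immediate.
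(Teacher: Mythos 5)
Your proposal is correct and matches the paper's reasoning: the paper gives no explicit proof beyond the phrase ``By Lemma 1,'' and your write-up supplies exactly the intended justification, namely applying Lemma 1 to the separable $\gamma$-set $B'=B_2'\cup B_1'$ of $G'$ with $B_2'$ in the dominating role. The role-swap of the indices that you highlight is indeed the only point requiring care, and you have handled it correctly.
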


The next Theorem is proved in \cite{MX}.

\begin{theorem}
A graph $G\neq\overline{K_n}$ is a universal fixer if for every permutation $\pi$ of $V(G)$ there exists a separable $\gamma$-set which is effective under $\pi$.
\end{theorem}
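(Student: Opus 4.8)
The plan is to show that the stated condition forces $\gamma(\pi G)=\gamma(G)$ for every permutation $\pi$ of $V(G)$; since $\gamma(G)\le\gamma(\pi G)$ holds in general, it suffices to produce, for each fixed $\pi$, a dominating set of $\pi G$ of cardinality $\gamma(G)$. So fix $\pi$ and let $A=A_1\cup A_2$ be a separable $\gamma$-set (with $A_1,A_2\neq\emptyset$) that is effective under $\pi$. By the definition of a separable $\gamma$-set we have $A_1\succ V-A$ in $G$, and by effectiveness $B'=\pi(A)$ is a $B_2'$-$\gamma$-set in $G'$, so $B_2'=\pi(A_2)\succ V(G')-B'$ in $G'$. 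The set I would test is
\[
D=A_1\cup B_2'=A_1\cup\pi(A_2),
\]
whose cardinality is $|A_1|+|A_2|=\gamma(G)$, since $A_1\cap A_2=\emptyset$ and $\pi$ is a bijection.

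To verify that $D$ dominates $\pi G=G\cup G'\cup M_\pi$ I would split each side into three blocks. On the $G$-side, $V(G)=A_1\cup A_2\cup(V-A)$: the vertices of $A_1$ lie in $D$; the vertices of $V-A$ are dominated by $A_1\subseteq D$; and each $u\in A_2$ is dominated through the matching edge $u\pi(u)\in M_\pi$, since $\pi(u)\in B_2'\subseteq D$. Symmetrically, on the $G'$-side, $V(G')=B_1'\cup B_2'\cup(V(G')-B')$: the vertices of $B_2'$ lie in $D$; the vertices of $V(G')-B'$ are dominated by $B_2'\subseteq D$; and each $\pi(u)\in B_1'$, with $u\in A_1$, is dominated through $u\pi(u)\in M_\pi$, since $u\in A_1\subseteq D$. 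Thus $N_{\pi G}[D]=V(\pi G)$, so $\gamma(\pi G)\le|D|=\gamma(G)$, and hence $\gamma(\pi G)=\gamma(G)$. As $\pi$ was arbitrary, $G$ is a universal fixer.

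The only non-mechanical point is to test the ``mixed'' set $A_1\cup\pi(A_2)$ rather than $A$ or $\pi(A)$ by itself; once this set is written down the verification is organized bookkeeping, the division of labour being that the matching $M_\pi$ dominates the inner blocks $A_2$ and $B_1'$, that separability and effectiveness dominate the outer blocks $V-A$ and $V(G')-B'$, and that $A_1$ and $B_2'$ dominate themselves. I therefore do not anticipate a genuine obstacle. (The hypothesis $G\neq\overline{K_n}$ is not used in this implication; it is relevant only to the converse, which the statement does not assert.)
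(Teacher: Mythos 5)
Your proof is correct. The paper does not actually prove this theorem---it quotes it from the cited reference [MX]---but your construction of the mixed dominating set $D=A_1\cup\pi(A_2)$, with the matching edges covering $A_2$ and $B_1'$ while separability and effectiveness cover $V-A$ and $V(G')-B'$, is exactly the standard argument behind that result, and your remark that $G\neq\overline{K_n}$ is not needed for this direction is also accurate.
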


\section{Main result}
\begin{dfn} Let $G$ be a graph. We say that a non-isolated vertex $x$ is $C_{3}$-free if $x$ belongs to no
triangle of $G$. 
\end{dfn}

Observe that $x\in V(G)$ is $C_3$-free if and only if $[N_G[x]]\cong K_{1,m}$ with $m\geq 1.$ Our main result is the following theorem.

\begin{theorem}
If $G$ has a $C_3$-free vertex $x,$  then $G$ is not a universal fixer.
\end{theorem}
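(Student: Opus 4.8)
The plan is to exhibit a single permutation $\pi$ of $V(G)$ under which $G$ has no separable $\gamma$-set that is effective, and then to appeal to the converse of Theorem 2 to conclude $\gamma(\pi G)>\gamma(G)$. That converse is routine and I would record it: if $D$ were a dominating set of $\pi G$ with $|D|=\gamma(G)$, set $D_G=D\cap V(G)$ and let $Q=\{v\in V(G):\pi(v)\in D\}$. Every $v\in V(G)$ is dominated in $\pi G$ by $D$ through a $G$-edge, through the matching, or by being in $D$, so $v\in N_G[D_G]\cup Q$; hence $A:=D_G\cup Q$ dominates $G$ with $|A|\le|D_G|+|Q|=|D|=\gamma(G)$, so $A$ is a $\gamma$-set, $D_G\cap Q=\emptyset$ and $D_G\succ V-A$. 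Arguing symmetrically on the $G'$-side (identifying $G'$ with $G$) gives that $\pi(A)$ is a $\gamma$-set of $G'$ with $\pi(Q)\succ V-\pi(A)$; and $D_G,Q$ are both nonempty, since otherwise $D\subseteq V(G)$ or $D\subseteq V(G')$ forces $\gamma(G)=n$, impossible as $x$ is non-isolated. So $A=D_G\cup Q$ is a separable $\gamma$-set effective under $\pi$, a contradiction. Thus it suffices to build one bad $\pi$.

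Write $N_G(x)=\{y_1,\dots,y_m\}$ with $m\ge1$. Since $x$ is $C_3$-free, $[N_G[x]]\cong K_{1,m}$: the set $N_G(x)$ is independent, and any two distinct vertices of $N_G[x]$ are adjacent or share the neighbour $x$. Consequently every $2$-packing of $G$ contains at most one vertex of $N_G[x]$; call this fact $(\star)$. I would choose $\pi$ to be the permutation of $V(G)$ that cyclically shifts $N_G[x]$: $\pi(x)=y_1$, $\pi(y_i)=y_{i+1}$ for $1\le i\le m-1$, $\pi(y_m)=x$, and $\pi(v)=v$ for $v\notin N_G[x]$ (for $m=1$ this is the transposition $(x\ y_1)$). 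The crucial feature is that $\pi$ maps $N_G[x]$ onto itself, so it cannot be "undone'' by staying inside this star.

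Assume for contradiction that $A=A_1\cup A_2$ is a separable $\gamma$-set effective under $\pi$. Applying Lemma 1 to $A$ and to the $\pi(A_2)$-$\gamma$-set $\pi(A)$ of $G'$, and transporting back along $G'\cong G$ (cf.\ Observation 1), one obtains: $A_1\succ V-A$, $\pi(A_2)\succ V-\pi(A)$; both $A_2$ and $\pi(A_1)$ are independent $2$-packings of $G$; and $E(A_1,A_2)=E(\pi(A_1),\pi(A_2))=\emptyset$. As $\pi$ preserves $N_G[x]$, $(\star)$ applied to $A_2$ and to $\pi(A_1)$ gives $|A_2\cap N_G[x]|\le1$ and $|A_1\cap N_G[x]|\le1$. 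Now one splits on the position of $x$. If $x\in A_1$, then $N_G(x)\cap A_1=\emptyset$ (by the bound) and $N_G(x)\cap A_2=\emptyset$ (by $E(A_1,A_2)=\emptyset$), so $N_G(x)\subseteq V-A$, $\pi(A_2)=A_2$ and $\pi(A)=(A\setminus\{x\})\cup\{y_1\}$; then $x\in V-\pi(A)$, so $\pi(A_2)\succ V-\pi(A)$ would require $A_2$ to meet $N_G(x)$ — impossible. If $x\in A_2$, again $N_G(x)\subseteq V-A$, so $A_1\succ V-A$ yields $z\in A_1$ with $z\sim y_1$ and $z\ne x$; but $z\in\pi(A_1)$, $y_1\in\pi(A_2)$, contradicting $E(\pi(A_1),\pi(A_2))=\emptyset$. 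The remaining, main case is $x\in V-A$: then $A_1\succ V-A$ forces a vertex $y_j\in A_1$, unique by $(\star)$, and $A_2\cap N_G(x)$ is at most one vertex $y_k$; the configuration $N_G[x]\subseteq V-A$ is impossible, since $x$ would then have no neighbour in $A_1$. One follows the $\pi$-preimage of $x$, which is $y_m$. If $y_m\in V-A$ then $x\in V-\pi(A)$ and $\pi(A_2)\succ\{x\}$, which (as $\pi$ permutes $N_G[x]$) forces $A_2\cap N_G(x)=\{y_k\}$ with $k\le m-1$; if moreover $k\ne j-1$ then $y_j$ lands in $V-\pi(A)$ and its only possible $\pi(A_2)$-neighbour is a vertex of $A_2$ adjacent to $y_j\in A_1$, against $E(A_1,A_2)=\emptyset$, while if $k=j-1$ then $y_k$ lands in $V-\pi(A)$ and its only possible $\pi(A_2)$-neighbour is a second vertex of $A_2$ in $N_G[y_k]$, against $A_2$ being a $2$-packing. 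If instead $y_m=y_j\in A_1$, then $x=\pi(y_j)\in\pi(A_1)$ forces $A_2\cap N_G(x)=\emptyset$, after which $y_m$ lands in $V-\pi(A)$ with no $A_2$-neighbour; and if $y_m=y_k\in A_2$, then $x=\pi(y_k)\in\pi(A_2)$ while $\pi(y_j)=y_{j+1}\in\pi(A_1)\cap N_G(x)$, again against $E(\pi(A_1),\pi(A_2))=\emptyset$.

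Every branch gives a contradiction, so no separable $\gamma$-set of $G$ is effective under $\pi$; by the first paragraph, $\gamma(\pi G)>\gamma(G)$, so $G$ is not a universal fixer. The easy part is the analysis for $x\in A_1$ and $x\in A_2$; I expect the main obstacle to be the case $x\in V-A$, where one must keep precise track of which vertex of $N_G[x]$ the cyclic permutation sends where, and, correspondingly, which of the three ``forbidden'' conditions — $E(A_1,A_2)=\emptyset$, $E(\pi(A_1),\pi(A_2))=\emptyset$, or ``$A_2$ is a $2$-packing'' — is being violated. This is exactly the point where the hypothesis that $x$ is $C_3$-free is used, via $(\star)$ together with the independence of $N_G(x)$.
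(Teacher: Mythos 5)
Your proof is correct and follows essentially the same route as the paper's: a permutation that fixes $V(G)-N_G[x]$ pointwise and deranges the star $N_G[x]$ with no $2$-cycles (your cyclic shift is a particular member of the family of permutations the paper allows), followed by a case analysis, driven by Lemma 1 and Observation 1, showing that no separable $\gamma$-set can be effective --- your cases are organized by the location of $x$ and of $\pi^{-1}(x)$ rather than by $|A\cap N_G[x]|$ as in the paper, but the contradictions invoked are the same. The one genuine addition is your first paragraph: the paper states Theorem 2 only as an ``if'' yet applies it as an ``only if'', and your sketch of that needed converse is correct and worth recording.
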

\begin{proof}
Let $G$ be a graph and $x$ a $C_{3}$-free vertex of $G$. Denote by $X$ the
subgraph $\left[ N_{G}\left[ x\right] \right] \cong K_{1,m}$ of $G$ for 
$m\geq 1$. We define $\pi :V(G)\rightarrow V(G^{\prime })$ such that:

\begin{enumerate}
\item $\pi (v)=v^{\prime }$ for every $v\in V(G)-X$,

\item $\pi (v)\neq v^{\prime }$ for every $v\in X$ and 

\item if $m\geq 2$ and $\pi (v)=u^{\prime },$ then $\pi (u)\neq v^{\prime }$
for every $u,v\in X$.
\end{enumerate}

Let $A=A_{1}\cup A_{2}$ be an arbitrary separable $\gamma $-set of $G.$ We
prove that $A$ is not effective under $\pi .$ 

Since $X=\left[ N_{G}\left[ x\right] \right] ,$ we get that $\left\vert
A\cap X\right\vert \geq 1$ for every dominating set $A$. We consider the
following three cases.\\

\textit{Case 1.} $A\cap X=\left\{ v\right\} $.

\begin{enumerate}
\item[(1)] If $v\in A_{1},$ then $\pi (A_{2})=A_{2}^{\prime }=B_{2}^{\prime }
$ and $v^{\prime }\in V(G^{\prime })-B^{\prime }$.\textit{\ }By Lemma 1, we
have that $E(A_{1},A_{2})=\emptyset $. Thus $E(\left\{ v^{\prime }\right\}
,A_{2}^{\prime })=\emptyset$ and it is a contradiction to the fact that $%
B_{2}^{\prime }=A_{2}^{\prime }\succ V(G^{\prime })-B^{\prime }$.

\item[(2)] If $v\in A_{2}$ and $\pi (v)=w^{\prime }$, then the definition of 
$A_{1}$ implies that there exists $u\in A_{1}$ such that $uw\in E(G)$. Hence 
$u^{\prime }w^{\prime }\in E(G^{\prime })$ and consequently, $E(\pi
(A_{1}),\pi (A_{2}))\neq \emptyset,$ which is a contradiction to
Observation 1.
\end{enumerate}

\textit{Case 2. }$A\cap X=\left\{ u,v\right\} $. Recall that for every $%
u,v\in X$ we have that either $uv\in E(G),$ or $uv\notin E(G)$ and $N(u)\cap
N(v)\neq \emptyset$.

\begin{enumerate}
\item[(1)] If $u,v\in A_{1}$, then $\left\vert B_{1}^{\prime }\cap X^{\prime
}\right\vert =2$. Thus $B_{1}^{\prime }$ is neither independent nor a $2$%
-packing, a contradiction to Observation 1.

\item[(2)] If $u,v\in A_{2}$, then $A_{2}$ is neither independent nor a $2$%
-packing, a contradiction to Lemma 1.
\end{enumerate}

From (1) and (2), we conclude that 
\begin{equation}
\left\vert A_{1}\cap X\right\vert \leq 1\text{ and }\left\vert A_{2}\cap
X\right\vert \leq 1  \label{case2}
\end{equation}
for every $A_{1}$-$\gamma $-set and therefore we only have to consider the
case when $u\in A_{1}$ and $v\in A_{2}$. Since $E(A_{1},A_{2})=\emptyset$, 
vertices $u$, $v$ and $x$ are all distinct. Analogously, since 
$E(B_{1}^{\prime },B_{2}^{\prime })=\emptyset,$ vertices $\pi (u)$, $\pi
(v)$ and $x^{\prime }$ are all distinct too. By the definition of $\pi $, if 
$\pi (y)=z^{\prime },$ then $\pi (z)\neq y^{\prime }$ for every $y\in X$.
Thus there exists $z\in X\cap (V(G)-A)$ such that either $\pi (u)=z^{\prime
}\neq v^{\prime }$ or $\pi (v)=z^{\prime }\neq u^{\prime }$. 

\begin{enumerate}
\item Suppose $\pi(u)=z'\neq v'$.  Since $A_1\succ V-A$, there exists $w\in A_1$ such that $zw\in E(G)$ and $\pi (w)=w'\in B_1'$. Then $z'w'\in E(B_1',B_1'),$ which is a contradiction with Observation 1.
\item Suppose that $\pi(v)=z'\neq u'$.  Since $A_1\succ V-A$, there exists $w\in A_1$ such that $zw\in E(G)$ and $\pi (w)=w'\in B_1'$. Then $z'w'\in E(B_1', B_2'),$ which is a contradiction with Observation 1.
\end{enumerate}

\textit{Case 3. }$\left\vert A\cap X\right\vert \geq 3.$ In this case, $
\left\vert A_{1}\cap X\right\vert \geq 2$ or $\left\vert A_{2}\cap
X\right\vert \geq 2$, which is impossible by (\ref{case2}) of Case 2.\\

From these cases, we conclude that every separable $\gamma $-set is not
effective under $\pi $. By Theorem $2$, the graph $G$ is not a universal fixer.
\end{proof}

Recall that the girth of a graph $G$ is the length of a
shortest cycle contained in the graph. Notice that in a graph $G$ with girth
four or more, every vertex is $C_{3}$-free. In particular, the bipartite graphs satisfy this condition.\\

\textbf{Acknowledgements}

We thank Bernardo Llano for useful comments. The authors thank the financial support received from
Grant UNAM-PAPIIT IN-117812 and SEP-CONACyT.

\end{document}